\documentclass[12pt]{amsart}
\usepackage{amsmath, amsthm, amscd, amsfonts, amssymb, graphicx, color,mathabx}
\usepackage[margin=2.8cm]{geometry}
\usepackage[bookmarksnumbered, colorlinks, plainpages]{hyperref}

\newtheorem{theorem}{Theorem}[section]
\newtheorem{lemma}[theorem]{Lemma}

\newtheorem{corollary}[theorem]{Corollary}
\theoremstyle{definition}

\newtheorem{example}[theorem]{Example}

\newtheorem{question}[theorem]{Question}

\theoremstyle{remark}

\numberwithin{equation}{section}

\newcommand{\N}{\mbox{$\mathbb{N}$}}

\DeclareMathOperator{\End}{End}
\DeclareMathOperator{\supp}{supp}
\DeclareMathOperator{\Ann}{Ann}

\allowdisplaybreaks
\begin{document}

\title{On Reduced archimedean skew power series rings}


\author{Hamed Mousavi, Farzad Padashnik and Ayesha Asloob Qureshi$*$}
\address{Farzad Padashnik, Department of Pure Mathematics, Faculty of Mathematical
Sciences, Isfahan University, Isfahan, Iran.}
\email{f.padashnik1368@gmail.com}
\address{Hamed Mousavi, Mathematics Department, Georgia Institute of Technology,  686 Cherry Street NW, Atlanta, Georgia 30313, US.}
\email{hmousavi6@gatech.edu}
\address{Ayesha Asloob Qureshi, Sabanc\i \; University, Faculty of Engineering and Natural Sciences, Orta Mahalle, Tuzla 34956, Istanbul, Turkey}
\email{aqureshi@sabanciuniv.edu}
\date{}

\begin{abstract}
In this paper, we prove that if $R$ is an Archimedean reduced ring and satisfy ACC on annihilators, then $R[[x]]$ is also an Archimedean reduced ring.
More generally we prove that if $R$ is a right Archimedean ring  satisfying the \emph{ACC} on annihilators and $\alpha$ is a rigid automorphism of $R$, then the skew power series ring $R[[x;\alpha ]]$ is right Archimedean reduced ring. 
We also provide some examples to justify the assumptions we made to obtain the required result.
\end{abstract}

\keywords{Skew power series ring, generalized power series rings, right (or left) Archimedean ring, Annihilator, Ascending chain conditions for principal one-sided ideals, ordered monoids}

\subjclass[2010]{16P70, 16P60, 13F10, 13J05}
\maketitle
\section{Introduction}
A ring $R$ is called \textit{left Archimedean}, if for each nonunit element $r\in R$ we have $\bigcap_{n\in \mathbb{N}}r^nR=\{0\}$. The right Archimedean rings are defined in a similar way. In \cite{Sheldon}, it is shown that if $R$ is a domain and $\bigcap_{n\in \mathbb{N}} a^nR=0$, then the quotient field $Q(R[[x/a]])$ has infinite transcendent degree over the quotient field $Q(R[[x]])$. A very important result in this direction is  that if $R$ is a domain (or is completely integrally closed) and satisfies \emph{ACCP}, then $R$ is Archimedean, see \cite{andersonanti,Sheldon}. However, each \emph{ACCP}-domain is not necessarily Archimedean, see for example, \cite[p. 1127]{Dumitrescu}. There are similar results  in \cite{ArnoldII,condo}.   


Ribenboim in \cite{Ribenboimsemisimple} defined the ring of
generalized power series $R[[S]]$ consisting of all maps from $S$ to
$R$ whose support is Artinian and narrow  with  the pointwise
addition and the convolution multiplication. This construction
provided interesting examples of rings, for example see \cite{Ribenboim (1995a),Ribenboim (1995b)}. In \cite{Ribenboim}, Ribenboim gave a sufficient condition for the ring $R[[S]]$ (not necessarily commutative) to be Noetherian.  However, Frohn  gave an example to show that \emph{ACCP} does not rise to the power series ring in general (see \cite{counter}). 

In \cite{zim}, R. Mazurek and M. Ziembowski, introduced a ``twisted" version of the Ribenboim
construction and studied when it produces a von Neumann regular ring.   
They also proved  that if $R$ is a domain and $\omega$ an endomorphism of  $R$, then  $R$ satisfies \emph{ACCPL}, $S$ is an \emph{ACCPL}-monoid and $\omega_s$  is injective  for each $s\in S$ if and only if $R[[S,\omega]]$ is an \emph{ACCPL}-domain. 

As a similar result, for a strictly ordered monoid $S$ and a monoid homomorphism $\omega : S \rightarrow \End (R)$, in \cite{paykan} it is proved that if the attached skew generalized power series ring $R[[S,\omega]]$ is a right Archimedean domain then $R$ is a right Archimedean domain, $S$ is a right Artinian and narrow monoid and $\omega_s$ is an injective which preserves nonunits of $R$ for any $s \in S$.
Next step in this direction, 
we show that when $R$ is right (resp. left) Archimedean and satisfies the \emph{ACC} on  annihilators, then $R[[x;\alpha]]$ is a right (resp. left) Archimedean ring. 
In section \ref{archh}, we give an example to justify that the condition given in Theorem \ref{rigidar} is necessary on the ring $R$, to be able to pass the property of being Archimedean between $R$ and $R[[x;\alpha]]$.

A ring $R$ is \textit{reduced}, if the condition $a^2=0$ results in $a=0$. According to Krempa \cite{krem}, an endomorphism $\alpha$ of a ring $R$ is said to be \textit{rigid} if $a\alpha(a)=0$ implies $a=0$, for $a \in R$. A ring $R$ is said to be $\alpha$-\textit{rigid} if there exists a rigid endomorphism $\alpha$ of $R$. Clearly, every domain $D$ with a monomorphism $\alpha$ is rigid. It is clear that, $\alpha$-rigid rings are reduced, (see e.g., \cite{ore}).
The following Lemma  by Hong in \cite{ore} is very useful  in the following sections.

\begin{lemma}\label{jav}
Let $R$ be a $\alpha$-rigid ring and $a,b \in R$. Then 
\begin{itemize}
\item[(1)] $R$ is reduced.
\item[(2)] If $ab=0$, then  we have
$a\alpha^{n}(b)=\alpha^{n}(a)b=0$  for any positive integer $n$.

\end{itemize}
\end{lemma}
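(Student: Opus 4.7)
The plan is to establish part (1) directly from the rigidity hypothesis via a double application of the defining property, and then to use reducedness as the workhorse for part (2).

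For (1), suppose $a^2 = 0$. I set $c := a\alpha(a)$ and compute
\[
c\,\alpha(c) \;=\; a\alpha(a)\,\alpha(a)\alpha^2(a) \;=\; a\,\alpha(a^2)\,\alpha^2(a) \;=\; 0,
\]
using that $\alpha$ is a ring endomorphism. Rigidity applied to $c$ gives $c = a\alpha(a) = 0$, and a second application of rigidity gives $a = 0$. Hence $R$ is reduced.

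For (2), the pivotal background fact is that in any reduced ring $ab = 0 \iff ba = 0$, since $(ba)^2 = b(ab)a = 0$ forces $ba = 0$. Assume $ab = 0$. To handle the case $n = 1$, I set $c := a\alpha(b)$ and observe
\[
c\,\alpha(c) \;=\; a\alpha(b)\,\alpha(a)\alpha^2(b) \;=\; a\,\alpha(ba)\,\alpha^2(b) \;=\; 0
\]
because $ba = 0$; rigidity then yields $a\alpha(b) = 0$. Dually, setting $d := b\alpha(a)$ and using $ab = 0$ directly gives $d\,\alpha(d) = b\,\alpha(ab)\,\alpha^2(a) = 0$, hence $b\alpha(a) = 0$, and a final appeal to reducedness produces $\alpha(a)b = 0$.

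For arbitrary $n$, I would induct. Given $a\alpha^n(b) = 0$ and $\alpha^n(a)b = 0$, the $n = 1$ case applied to the pairs $(a,\alpha^n(b))$ and $(\alpha^n(a),b)$ respectively produces $a\alpha^{n+1}(b) = 0$ and $\alpha^{n+1}(a)b = 0$. The only genuinely clever step in the whole argument is the squaring trick $x \mapsto x\alpha(x)$ which lets rigidity substitute for an a priori unavailable reducedness in part (1); everything else is bookkeeping once the reduced-ring equivalence $ab=0 \iff ba=0$ is in hand.
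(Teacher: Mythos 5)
Your proposal is correct, and part (1) is essentially the paper's argument verbatim: the squaring trick $c:=a\alpha(a)$, $c\alpha(c)=0$, followed by two applications of rigidity. In part (2) you diverge in a small but meaningful way. The paper handles all $n$ in one shot: from $ba=0$ it writes $a\alpha^{n}(b)\,\alpha^{n}\bigl(a\alpha^{n}(b)\bigr)=0$ and then invokes ``rigidity'' with $\alpha^{n}$ in place of $\alpha$, i.e.\ it implicitly uses the implication $c\alpha^{n}(c)=0\Rightarrow c=0$, which is not the literal defining property of an $\alpha$-rigid ring and strictly speaking needs its own justification. You instead prove only the case $n=1$ with the genuine rigidity hypothesis (via $c:=a\alpha(b)$, $d:=b\alpha(a)$, together with the reduced-ring equivalence $ab=0\Leftrightarrow ba=0$ supplied by part (1)) and then extend to all $n$ by induction, applying the $n=1$ statement to the pairs $(a,\alpha^{n}(b))$ and $(\alpha^{n}(a),b)$. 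This costs you an induction but buys a proof that stays entirely within the stated definition of rigidity; in effect your induction is exactly the missing argument that legitimizes the paper's use of $c\alpha^{n}(c)=0\Rightarrow c=0$. Both routes hinge on the same central device, the substitution $x\mapsto x\alpha(x)$ that converts rigidity into a reducedness-type cancellation.
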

\begin{proof}
(1) Let $a^2=0$. Then $a\alpha(a)\alpha(a)\alpha^2(a)=0$. So $a\alpha(a)\alpha(a\alpha(a))=0$. Because $R$ is rigid $a\alpha(a)=0$. Another use of it gives us $a=0$.

(2) Let $ab=0$. So $ba=0$. This implies that $a\alpha^n(ba)\alpha^{2n}(b)=0$. So $a\alpha^n(b)\alpha^n(a\alpha^n(b))=0$. Since $R$ is $\alpha-$rigid, then $a\alpha^n(b)=0$. Similarly $\alpha^n(a)b=0$.
\end{proof}

%
\medskip
Throughout this paper, all monoids and rings are with identity element that is inherited by submonoids and subrings and preserved under homomorphisms, but neither monoids nor rings are assumed to be - necessarily -  commutative.\par

\section{Archimedean skew power series rings}\label{archh}
In this section, we give the conditions on the ring $R$ such that the property of being Archimedean can be transferred from $R$ to the skew power series ring $ R[[x;\alpha]]$ and vice versa. The following results in \cite[Theorems 4.1]{paykan} describe such a transfer with the assumption that $ R[[S,\omega]]$ is a domain. 

Let  $\omega: S\longrightarrow \End(R)$ be a monoid homomorphism, where $R$ is a ring and $(S,\le)$ is a strictly ordered monoid. If $R[[S,\omega]]$ is a left (resp. right) Archimedean domain, then one can see that $R$ is a left (resp. right) Archimedean domain. It can be proved straightforward from the definition and the fact that $\bigcap _{n\in \mathbb{N}}(c_a)^nR[[S,\omega]]= 0$ for nonunit $a$. Also, $S$ is a left (resp. right) Artinian and  narrow monoid because of the fact that $\bigcap _{n\in \mathbb{N}}(\textbf{e}_s)^nR[[S,\omega]]=0$ for nonunit $s\in S$. Moreover, $\omega _s$ is injective (and preserve nonunits of $R$) for any $s\in S$. 

As mentioned in previous section, if $(S,.,\leq)$ is a (in particular positive) strictly totally ordered monoid and $0\neq f \in R[[S,\omega]]$, then $\supp( f )$ is a nonempty well-ordered subset of $S$. The smallest element of $\supp(f)$ is denoted by $\pi( f )$ and we set $\theta(f):=f(\pi(f))$. In particular, for a power series $f(x)=\sum f_nx^n$, with the first nonzero coefficient is $f_m$, we have $\pi(f)=m$ and $\theta(f)=f_m$.
 Let $U(R)$ be the set of unit elements of $R$. Then it is shown  in \cite[Proposition 3.2]{zim} that if $\pi(f) \in U(S)$ and $\theta (f) \in U(R)$, then $f \in U(R[[S,\omega]])$, where $f\in R[[S,\omega]] \setminus \{0\}$. 
\begin{theorem}\cite[Theorems 4.1]{paykan}\label{paykan}
Let $R$ be a ring, $(S,\le)$ a strictly ordered monoid and $\omega: S\longrightarrow \End(R)$ a monoid homomorphism. Then we have the following:
\begin{itemize}
\item[{\em (i)}] The ring $R[[S,\omega]]$ is a right Archimedean domain if and only if $R$ is a
right Archimedean domain, $\omega _s$ is injective and preserves nonunits of $R$ for any $s\in S$. 

\item[{\em (ii)}] The ring $R[[S,\omega]]$ is a left Archimedean domain if and only if $R$ is a left Archimedean domain and $\omega _s$ is injective for any $s\in S$. 
\end{itemize}
\end{theorem}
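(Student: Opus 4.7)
The plan is to argue each direction of each bi-implication separately. The necessity direction has essentially been sketched in the paragraph preceding the theorem, so my focus is on how to complete that sketch and then on the converse direction, which carries the bulk of the work.

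For necessity in (i), if $R[[S,\omega]]$ is a right Archimedean domain, then $R$ embeds as the constant subring and inherits the domain property. For a nonunit $a\in R$, the constant series $c_a$ is nonunit in $R[[S,\omega]]$ by the contrapositive of Proposition 3.2, since $\theta(c_a)=a\notin U(R)$; intersecting $\bigcap_n c_a^n R[[S,\omega]]=0$ with the constants yields $\bigcap_n a^n R=0$. Injectivity of each $\omega_s$ follows from the domain condition by computing a suitable product involving $\mathbf{e}_s$ and a putative kernel element. If some nonunit $a$ had $\omega_s(a)$ a unit, one constructs a nonzero element lying in the intersection of all powers of a suitable nonunit of $R[[S,\omega]]$, contradicting the Archimedean hypothesis; this yields nonunit preservation.

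For sufficiency in (i), assume $R$ is a right Archimedean domain, and $\omega_s$ is injective and preserves nonunits for every $s\in S$. First, $R[[S,\omega]]$ is a domain: for nonzero $f,g$, use $\pi(fg)=\pi(f)\pi(g)$ (well-defined because the order is strict) together with $\theta(fg)=\theta(f)\omega_{\pi(f)}(\theta(g))$, which is nonzero since $R$ is a domain and $\omega_{\pi(f)}$ is injective. To establish the Archimedean condition, pick a nonunit $f\in R[[S,\omega]]$ and $g\in\bigcap_n f^n R[[S,\omega]]$, so $g=f^n h_n$ for each $n$. By Proposition 3.2, either $\pi(f)\notin U(S)$ or $\theta(f)\notin U(R)$. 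In the first case, the chain $\pi(f)<\pi(f)^2<\cdots$ is strictly increasing in $S$, which together with $\pi(g)=\pi(f)^n\pi(h_n)$ forces $g=0$ by well-orderedness of $\supp(g)$. In the second case, iterate the $\theta$-formula to write $\theta(f^n)=\theta(f)\,\omega_{\pi(f)}(\theta(f))\cdots\omega_{\pi(f)^{n-1}}(\theta(f))$; using that each $\omega_s$ preserves nonunits, conclude $\theta(g)\in \theta(f)^n R$ for all $n$, and apply the Archimedean property of $R$ to the nonunit $\theta(f)$ to obtain $\theta(g)=0$, whence $g=0$.

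Part (ii) is handled symmetrically by working with $\bigcap_n R[[S,\omega]]f^n$; the reason nonunit preservation is not needed on the left is that, in the formula $\theta(fg)=\theta(f)\omega_{\pi(f)}(\theta(g))$, the ``outer'' factor $\theta(f)$ carrying the Archimedean decay no longer has an $\omega_s$ wrapped around it when $f$ appears on the right of the product. The principal obstacle is the case $\theta(f)\notin U(R)$ of sufficiency in (i), where one must combine the iterated $\theta$-formula, the nonunit preservation of each $\omega_{\pi(f)^i}$, and the Archimedean hypothesis on $R$ in a single bookkeeping argument, with additional care in the non-totally-ordered setting since $\pi$ only records the minimum of a well-ordered support and one must track how lower elements of $\supp(g)$ factor through $f^n$.
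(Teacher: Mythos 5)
First, a remark on the comparison itself: the paper does not prove Theorem \ref{paykan} at all --- it is quoted from \cite{paykan}, and only the necessity direction is sketched in the paragraph preceding the statement. Your necessity argument is essentially that sketch and is fine up to a small slip: that $c_a$ is a nonunit of $R[[S,\omega]]$ when $a$ is a nonunit of $R$ is not ``the contrapositive of Proposition 3.2'' (which only gives a \emph{sufficient} condition for being a unit); it follows by evaluating $c_ag=gc_a=1$ at $1_S$. The substantive problems are in the sufficiency direction, which you correctly identify as the main work but do not actually carry out correctly.

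There are two genuine gaps. (1) In the case $\theta(f)\notin U(R)$, the step ``using that each $\omega_s$ preserves nonunits, conclude $\theta(g)\in\theta(f)^nR$'' is a non sequitur: your own formula $\theta(f^n)=\theta(f)\,\omega_{\pi(f)}(\theta(f))\cdots\omega_{\pi(f)^{n-1}}(\theta(f))$ is a product of $n$ \emph{distinct} twisted copies of $\theta(f)$, and neither injectivity nor nonunit preservation turns it into a power of the single element $\theta(f)$; the Archimedean hypothesis on $R$ controls powers of one fixed nonunit, not products of different nonunits. The argument only closes after reducing to $\pi(f)=1_S$ (the case $\pi(f)\in U(S)$), where $\theta(f^n)=\theta(f)^n$ with no twist: then for $g=f^nh_n$ one gets $\theta(g)\in\theta(f)^nR$ and \emph{no} preservation of nonunits is needed, whereas for $g=h_nf^n$ one gets $\theta(g)=\theta(h_n)\,\omega_{\pi(g)}(\theta(f))^n\in R\,\omega_{\pi(g)}(\theta(f))^n$, and preservation is needed precisely to know that the fixed element $\omega_{\pi(g)}(\theta(f))$ is a nonunit --- this is exactly the mechanism used inside the proof of Theorem \ref{rigidar}. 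You have attached the two factorizations to the wrong parts: with the paper's conventions, part (i) is the $g=h_nf^n$ side, and your explanation at the end of why (ii) does not need nonunit preservation (``the outer factor $\theta(f)$ no longer has an $\omega_s$ wrapped around it when $f$ appears on the right'') is exactly backwards. (2) In the case $\pi(f)\notin U(S)$, neither ``$\pi(f)<\pi(f)^2<\cdots$'' nor ``this forces $g=0$ by well-orderedness of $\supp(g)$'' is justified for a general strictly ordered monoid: nothing gives $1_S\le\pi(f)$, and even in a positively strictly totally ordered monoid a fixed element can lie above the whole chain $\{\pi(f)^n\}$ (lexicographic $\mathbb{N}\times\mathbb{N}$, $(0,n)<(1,0)$ for all $n$), so the equations $\pi(g)=\pi(f^n)\pi(h_n)$ do not by themselves force $g=0$. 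Indeed the statement as transcribed here, for an arbitrary strictly ordered monoid, cannot be proved at all: taking the trivial order on a nontrivial finite group makes $R[[S,\omega]]$ a group ring, which is not a domain even when the right-hand conditions hold. The hypotheses on $S$ present in \cite{paykan} but suppressed in this statement must therefore enter the proof at exactly these two points, and your closing sentence defers, rather than supplies, that argument.
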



 Before stating the main result of the paper, we first recall a few definitions. An ideal $P$ of $R$ is \textit{completely prime} if $ab\in P$ implies $a\in P$ or $b\in P$ for $a,b\in R$.  Let $\alpha$ an endomorphism of $R$. An ideal $I$ of $R$ is called an \textit{$\alpha$-ideal} if $\alpha (I)\subseteq I$ and $I$ is called \textit{$\alpha$-invariant} if $\alpha ^{-1}(I)=I$. Note that if $I$ is an $(\alpha)$-ideal, then $\bar{\alpha}:R/I\longrightarrow R/I$ defined by $\bar{\alpha}(a + I)=\alpha (a)+I$ for $a\in R$ is an endomorphism of the factor ring $R/I$. 

We prove a similar result in the realm of reduced rings. Reduced rings are natural generalization of domains, for example direct product of domains is reduced. Therefore, Archimedean rings can also keep certain properties similar to the Archimedean domains.

\begin{theorem}\label{rigidar}
Let  $R$ be a $\alpha$-rigid ring satisfying the ACC on annihilators and 
$\alpha$  be a surjective endomorphism.
Then:
\begin{enumerate}
\item[\em{(1)}] $R$ is right Archimedean and $\alpha$ preserves nonunits of $R$ if and only if the reduced ring $R[[x;\alpha]]$ is right Archimedean .
\item[\em{(2)}] $R$ is left Archimedean if and only if the reduced ring $R[[x;\alpha]]$ is left Archimedean.
\end{enumerate} 
\end{theorem}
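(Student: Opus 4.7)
The plan splits each part into two directions; the ``only if'' is straightforward while the ``if'' contains the technical core.

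For the ``only if'' direction of both parts, I would restrict to the constant subring $R \hookrightarrow R[[x;\alpha]]$. By the unit criterion recalled in the preamble (a series is a unit iff its order is a monoid unit and its leading coefficient is a unit), any nonunit $a \in R$ remains a nonunit in $R[[x;\alpha]]$, so $\bigcap_n a^n R \subseteq \bigcap_n a^n R[[x;\alpha]] = 0$. The ``preserves nonunits'' clause of part (1) is then automatic, because the injectivity built into $\alpha$-rigidity together with the assumed surjectivity makes $\alpha$ a ring automorphism, and ring automorphisms always preserve units and nonunits.

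For the ``if'' direction of part (1), fix a nonunit $f \in R[[x;\alpha]]$ and split on $\pi(f)$. When $\pi(f) \geq 1$, iterated application of Lemma \ref{jav} shows that the leading coefficient of $f^n$, namely $f_{\pi(f)}\alpha^{\pi(f)}(f_{\pi(f)})\cdots \alpha^{(n-1)\pi(f)}(f_{\pi(f)})$, is nonzero, so $\pi(f^n) = n\pi(f) \to \infty$; every $g \in \bigcap_n f^n R[[x;\alpha]]$ then has $\pi(g) \geq n\pi(f)$ for all $n$ and must vanish. When $\pi(f) = 0$, the unit criterion forces $f_0$ to be a nonunit of $R$, so $\bigcap_n f_0^n R = 0$. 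For $g = f^n h_n$, I would show $g_k = 0$ by induction on $k$: the base is $g_0 = f_0^n(h_n)_0 \in f_0^nR$, and for the step I expand
\[
g_k = f_0^n (h_n)_k + \sum_{i=1}^{k} (f^n)_i \alpha^i\bigl((h_n)_{k-i}\bigr).
\]
The inductive hypothesis $g_j=0$ combined with the reduced-ring identity $\Ann_R(a^n) = \Ann_R(a)$ yields $(h_n)_j \in \Ann_R(f_0)$ for $j < k$ and sufficiently large $n$; the ACC on annihilators gives uniform control over these stabilizations.

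The hard part will be to annihilate the cross terms in this expansion. I would invoke three ingredients: (i) in a reduced $\alpha$-rigid ring, $\Ann_R(f_0)$ is a two-sided, $\alpha$-invariant ideal; (ii) each summand of $(f^n)_i$ is a product of $n$ skew factors $\alpha^{s_l}(f_{k_l})$ with $\sum k_l = i$, so whenever $n > i$ at least one index satisfies $k_l = 0$ and contributes a factor $\alpha^{s_l}(f_0)$; (iii) the implications $ab = 0 \Rightarrow \alpha^m(a)b = 0$ and $a\alpha^m(b) = 0$ from Lemma \ref{jav}. Combining, the tail of any such summand ends in $\alpha^i((h_n)_{k-i}) \in \Ann_R(f_0)$ and lies entirely in the left ideal $\Ann_R(f_0)$, so the $\alpha^{s_l}(f_0)$ factor annihilates it. Consequently, for $n > k$ all cross terms vanish and $g_k = f_0^n (h_n)_k \in f_0^n R$, forcing $g_k = 0$. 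Part (2) is dual: the same plan applies on the other side using $g = h_n f^n$, and the degree-zero identity $g_0 = (h_n)_0 f_0^n$ no longer involves $\alpha$, which is precisely why the ``preserves nonunits'' hypothesis is not needed there.
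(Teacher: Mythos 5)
Your proposal is essentially correct, but it proves the theorem by a genuinely different route than the paper. The paper argues by contradiction: assuming some nonunit $g$ has $\bigcap_n Ag^n\neq 0$, it uses the ACC on annihilators to pick a maximal annihilator $V$ of a union of leading-term ideals, shows $V$ is a completely prime, $\alpha$-invariant ideal, passes to the domain $R/V$, and invokes the earlier domain result (Theorem \ref{paykan}) for $(R/V)[[x;\overline{\alpha}]]$ to reach a contradiction. You instead run a direct coefficientwise induction: for $\pi(f)\geq 1$ the orders $\pi(f^n)\geq n\pi(f)$ force the intersection to vanish, and for $\pi(f)=0$ you show $g_k\in f_0^nR$ for $n>k$ by killing the cross terms, using that in a reduced $\alpha$-rigid ring $\Ann(f_0)$ is a two-sided $\alpha$-stable ideal, that every word occurring in $(f^n)_i$ with $i<n$ contains an $\alpha^{s}(f_0)$ factor, and Lemma \ref{jav}. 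This is more elementary and self-contained, and notably it does not actually use the ACC on annihilators at all: your appeal to ACC for ``uniform control'' is superfluous, since $\Ann_R(a^n)=\Ann_R(a)$ holds in every reduced ring with no chain condition. If written out carefully (the claim $(h_n)_j\in\Ann(f_0)$ for $j<k$, $n>j$ must be carried along as part of the induction, since the cross-term annihilation at stage $k$ uses it), your argument would in fact prove the transfer without the ACC hypothesis, whereas the paper's argument needs ACC to produce the maximal element $V$ but gets to recycle the known domain theorem.

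Two bookkeeping points to fix. First, with the paper's conventions (left Archimedean means $\bigcap_n r^nR=0$, right means $\bigcap_n Rr^n=0$), your computation with $g=f^nh_n$ is the left-sided statement and $g=h_nf^n$ the right-sided one, so your parts (1) and (2) are swapped relative to the theorem. Second, and relatedly, your closing remark about where ``preserves nonunits'' is needed is inverted: in the $g=h_nf^n$ case the degree-zero identity indeed has no $\alpha$, but the surviving term in degree $k$ is $(h_n)_k\alpha^k(f_0)^n\in R\,\alpha^k(f_0)^n$, so it is precisely that side which needs $\alpha^k(f_0)$ to be a nonunit; the $g=f^nh_n$ side only ever produces $f_0^n(h_n)_k\in f_0^nR$ and needs nothing. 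This does not break your proof, because, as you correctly observe, rigidity forces $\alpha$ injective, so surjectivity makes $\alpha$ an automorphism and ``preserves nonunits'' is automatic under the stated hypotheses; but the structural explanation should be corrected, and your easy direction should also note that a unit of $R[[x;\alpha]]$ has unit constant term by comparing constant terms of $fa=1$ and $af=1$ (the cited criterion from \cite{zim} only gives the sufficient direction).
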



\begin{proof}
We will give the proof for the right-sided version. The proof for the left-sided version follows on the same line if the order of the multiplication is changed from right to left throughout the proof. We recall that for a power series $g(x)=g_0+g_1x+\cdots $ we have $\pi(g):=m$ where $g_m\neq 0$  and $g_i=0$ for $0\leq i<m$. Also $\theta(g)=g_m$ is the leading term of the power series. We set $\pi(0)=-\infty$.

\medskip
 First, assume that $R$ is right Archimedean.  According to \cite{moussavi}, if $R$ is rigid, then $R[[x;\alpha]]$ is a reduced ring.  Let $A=R[[x;\alpha]]$ and 
 \[
 \Gamma := \{g \in A\; | \;  \text{ $g$ is a nonunit in $A$ and}\; \bigcap_{n\in \mathbb{N}}Ag^n\neq \{0\}\}.
 \]
We need to show that $\Gamma=\emptyset$. Assume that $\Gamma \neq \emptyset$.
We define  the set of leading terms $I_f:=\{\theta (g)|g\in fA\}\cup \{0\}$. Then, clearly $I_f$ is a right ideal of $R$.
Let
\[
T:=\{\Ann\big(\bigcup_{i\in \mathbb{N}}\;I_{c_ig}\big)|
 \;  c_i \in R,\; g\in \Gamma \; \text{and}\;  \; c_ig\neq 0,\;c_ig^i=c_jg^j \; \forall \; i,j\in \mathbb{N}\}.
\]
 Then $T \neq \emptyset$ because we assumed that $\Gamma \neq \emptyset$. Note that $\Ann\big(I_{c_ig}\big)=\Ann\big(I_{c_ig^i}\big)$ because $R$ is rigid. Considering that $R$ satisfies \emph{ACC} on annihilators, and then applying Zorn's Lemma results in the fact that $T$ has a maximal element. Let $V:=\Ann\big(\bigcup_{i\in \mathbb{N}}I_{a_if}\big)$ be the maximal element of $T$ for some $f\in
\Gamma$ such that
\begin{align}\label{sharto}
 a_i \in R, \; a_if\neq 0,\; \text{and}\; a_if^i=a_jf^j \; \forall \; i,j\in \mathbb{N}.
\end{align}

\textbf{Step 1.} \textit{We claim that $V$ is a two-sided completely prime ideal of $A$. }

From Lemma \ref{jav}, we see that $R$ is reduced because it is rigid. It implies that annihilators in $R$ are two-sided ideals. Let $ab\in V$ for some $a,b\in R$ and $b\notin V$. We need to show that $a\in V$. 
First, we show that $a\in \Ann(I_{ba_if})$ for all $i \in \N$. Observe that if $ba_ifh=0$ for all $h\in A$ and $i \in \N$, then $b\in V$ which contradicts our assumption that $b\notin V$.  Therefore, $ba_jfA \neq \{0\}$ for some $j \in \N$. Take $ba_jfh \in ba_jfA $ such that $ba_jfh \neq 0$. This means that $\bigcup_i I_{ba_if}\neq 0$. Also, for all $i\in \mathbb{N}$ and for all $h\in A$ we have  $ab\theta (a_ifhb)=0$ because $ab \in V$ and $a_ifhb\in a_ifA$. Assume that $\theta (a_ifhb):=tb$, then $\theta (ba_ifh)=bt$ because $R$ is reduced. We conclude that $ab\theta (ba_ifh)=0$, because $R$ is reduced  - since in every reduced ring $wyz=0$ implies $wzy=0$. 

Take $w=a$ and $y=bt$ and $z=b$. 
So $abbt=ab^2t=0$. Again by using that $R$ is reduced, we get $abt=0$. So $a \theta (ba_i fh)=0$ for all $i \in \N$ and $h\in A$; and we conclude that $a\in \Ann (\bigcup_{i\in \mathbb{N}} I_{ba_if})$. 

Let $z\in V$. So $z\theta(a_ifl)=0$ for all $i>0$ and $l\in A$. In particular, $z\theta(a_ifhb)=0$ for all $h\in A$. Considering that $R$ is reduced, we get $z\theta(ba_ifh)=0$. This implies that $z\in \Ann(\bigcup_{i\in \mathbb{N}} I_{ba_if})$, which means that 
\begin{align}\label{shar}
V\subseteq \Ann(\bigcup_{i\in \mathbb{N}} I_{ba_if}).
\end{align}
Also, since  $ba_ifA \neq 0$, for some $i \in \N$ and $ba_if^i=ba_jf^j \; \forall \; i,j\in \mathbb{N}$, we see that $\Ann(\bigcup_{i\in \mathbb{N}} I_{ba_if})\in T$. The equation
(\ref{shar}) and the maximal property of $V$ in $T$ yields that
$V=\Ann(\bigcup_{i\in \mathbb{N}} I_{ba_if})$. This implies that
$a\in V$ and that $V$ is a two sided completely prime ideal.

\textbf{Step 2.}
\textit{We show that $V$ is $\alpha$-invariant.} 

Let $r\in \alpha^{-1}(V)$. Then there exists $v\in V$ such that
$\alpha^{-1}(v)=r$ and we have $\alpha(r)=v$. By definition
of $V$, we have $\alpha(r)u=0$ for all $u\in \bigcup_{i\in
\mathbb{N}}(I_{a_if})$. Then, from Lemma~\ref{jav}, we have  $ru=0$ and it shows that
$r\in V$ and $\alpha^{-1} (V)\subseteq V$.

To show the reverse inclusion, take $v\in V$. Then for all $u\in \bigcup_{i\in \mathbb{N}}(I_{a_if})$ we have $vu=0$. Again, by Lemma~\ref{jav}, we have $\alpha(v)u=0$ . It shows that $\alpha(v)\in V$. Since $\alpha$ is surjective and rigid, hence an
automorphism, we see that $v \in \alpha^{-1}(V)$ and therefore, $V\subseteq \alpha^{-1}(V)$. From this we conclude that $V$ is an $\alpha$-invariant.


\textbf{Step 3.} \textit{ We show that $W:=(\frac{R}{V})[[x;\overline{\alpha}]]$ is a well-defined Archimedean domain.}

We already know that $V$ is a two-sided completely prime ideal, so the
factor ring $U:=\frac{R}{V}$ is a well-defined domain. Then it is enough to show that $U$ is an Archimedean domain because then by using
Theorem \ref{paykan} we can conclude that $U[[x;\overline{\alpha}]]$ is an Archimedean
domain, where $\overline{\alpha}: R/V \longrightarrow R/V$ defined by $\overline{\alpha}(a+V)=\alpha(a)+V$ for all $a\in R$ is an endomorphism of factor ring $R/V$.


To show that $U$ is an Archimedean
domain, we take  $\overline{z}\in \bigcap _{i\in
\mathbb{N}}U\bar{y}^i $. Then $\overline{z}=\bar{z_1} \bar{y}=\bar{z_2} \bar{y}^2=\cdots$ for some $\overline{z_i}\in U$. It gives, \begin{align}\label{farzado}
z+l_0=z_1y+l_1=z_2y^2+l_2=\cdots.
\end{align}
for some $l_i \in V$. We choose $h\in A$ and $i\in
\mathbb{N}$. By multiplying  \ref{farzado}  with $\theta (a_ifh)$ and using that $l_i \in V$ for all $i$, we obtain
$\theta(a_ifh)z=\theta(a_ifh)z_1y=\cdots$. Then $\theta(a_ifh)z=0$, because $R$
is Archimedean and $y$ is not unit in $R$, otherwise $\overline{y}$ becomes a unit in $U$. As $h$ and $i$ are
chosen arbitrarily, we obtain that $z\in \Ann( I_{a_if})$ for all $i\in \mathbb{N}$ and therefore, $z\in V$. This shows, $\overline{z}=0$, as required. Therefore, $U$ is Archimedean.

\textbf{Step 4.} \textit{We claim that $\overline{f}$ is nonunit in $W=R/V[[x;\overline{\alpha}]]$.} 

Assume that there exists $g\in A$
such that $\overline{f}\overline{g}-\overline{1}=\overline{0}$. It means that all coefficients of $fg-1$ belong to $V$. In particular, $\theta (fg-1) \in V$ and we have  $\theta (fg-1)\theta (a_if)=0$ for all $i\in\mathbb{N}$. Assume that $f(x)=\sum f_lx^l$ and $g(x)=\sum g_l x^l$. We show that $f_0g_0$ is not a unit.

\textbf{Case 1.}  Assume that $\pi(fg-1)>0$. 
Then we have
 $f_0g_0=\theta(fg)=1$. 
Thus, 
$f_0$ is a unit. 
This implies that $f$ is a unit, which contradicts to our assumption that $f$ is non-unit. Therefore, we conclude that $\pi(fg-1)> 0$ is not possible.

\textbf{Case 2.} Now, let $\pi(fg-1)=0$. Note that $(fg)_0-1\in V$ and  $(fg)_0-1\neq 0,-1$, because otherwise it shows that  $V=A$, which is impossible. Since $(fg)_0\neq 0$, then $(fg)_0$ is a nonunit, otherwise $fg$ is unit, \cite[Proposition 3.2]{zim}, which is not possible. 
 Therefore, $f_0g_0$ is a nonunit. 

We assumed that $\overline{f}$ is a unit, which implies that $\overline{f}^i$ is a unit for all $i \in \N$. For each $i \in \N$, let $\overline{b_i}$ be the inverse of $\overline{f}^i$. Then we have  $\overline{b_i}\overline{f}^i=\overline{b_j}\overline{f}^j=1$, for all $i,j \in \N$. 
It means that for all $i,j \in \N$ we have $\overline{b_i}\overline{f}^i-\overline{b_j}\overline{f}^j=0$ and  all the coefficients of $b_if^i-b_jf^j$ belong to $V$. It leads us to have 
$\theta(a_1f)(b_if^i-b_jf^j)=0$  - because $R$ is reduced. 

Therefore, we obtain
\begin{align}\label{equ}
\theta (a_1f)=\theta (a_1f)b_1f=\theta (a_1f)b_2f^2=\cdots.
\end{align}

We know from equation (\ref{sharto}) that  $\theta (a_1f)\neq 0$, which gives $\theta (a_1f)(b_if^i)_0\neq
0$.
 Let $b_i(x)=\sum b_{i,j}x^j$ and  $t_i:=\theta (a_1f)b_{i,0}$ and $r:=f_0$. 
One can see that $(b_if^i)_0
=b_{i,0}f^i_0=b_{i,0}r^i$.
So $0\neq \theta (a_1f)(b_if^i)_0=t_ir^i$.
Thus, $t_{i+1}r^{i+1}=t_ir^i\neq 0$ for all $i$. Note that $r\neq 0$ is a non-unit because $f$ is non-unit. It sums up to the result that $R$  is non-Archimedean which is contradiction to our hypothesis. Therefore, $\overline{f}$ must be a non-unit.

\textbf{Step 5.} \textit{Finally, we are ready to show that $A$ is an Archimedean reduced ring.}

From equation (\ref{sharto}), we have
$\overline{a_1}\overline{f}=\overline{a_2}\overline{f^2}=\cdots$.
In Step 3 and 4, we showed that $W$ is Archimedean domain and $\overline{f}$ is a non-unit in $W$, so we must have $\overline{a_1}\overline{f}=\overline{0}$ and consequently, $\overline{a_1}=\overline{0}$ or
$\overline{f}=\overline{0}$. If $\overline{a_1}=\overline{0}$, then all coefficients of $a_1=\sum a_{1,j}x^j$ belong to $V$ and $a_{1,j}\theta(a_1f)=0$ for all $j\in \mathbb{N}$. 
Again, by using Lemma \ref{jav} we obtain
\begin{align*}
\big( \theta (a_1f)\big)^2 =\bigg(\theta(a_1f)\bigg)\bigg(\theta(a_1f)\bigg)
&=\theta(a_1f)\left(\sum_{x+y=\pi(a_1f)}a_{1,x}\alpha^x(f_y)\right)\nonumber\\
&=\sum_{x+y=\pi(a_1f)}\left(\theta(a_1f)a_{1,x}\alpha^x(f_y)\right)
\end{align*}
Because $\theta (a_1f)a_{1,x}=0$ for all  $x\in S$, we see that
$\theta (a_1f)^2=0$. It gives that $\theta (a_1f)=0$ because $R$ is reduced. This gives us a contradiction to our assumption that $a_if \neq 0$ for all $i \in \N$. Therefore, $\overline{a_1} \neq 0$. 
Then we must have $\overline{f}=\overline{0}$, that is, all coefficient of $f$ must belong to $V$ and $f_j\theta (a_1f)=0$ for all $j\in \mathbb{N}$. Then, from Lemma~\ref{jav}, we see that $\theta(a_1f)\alpha^l\left(f_j\right)=0$. This gives,  
\begin{align}
\bigg(\theta(a_1f)\bigg)^2=\sum_{x+y=\pi(a_1f)}a_{1,x}\alpha^x(f_y)\theta(a_1f)=0
\end{align}
which again implies that $\theta((a_1f))^2=0$ and contradicts our assumption that $a_if \neq 0$ for all $i \in \N$. Therefore, our primary assumption $\Gamma \neq \emptyset$ is false. It shows that $A$ is an Archimedean ring, which completes the proof of the "only if" part.

Conversely,  assume that $\bigcap _{n=1}^{\infty}Ra^n \neq 0$ for some $a\in R$. Then $0\neq \bigcap _{n=1}^{\infty}Ra^n \subseteq \bigcap _{n=1}^{\infty} Aa^n$. But $A$ is a right Archemidean ring, so $a$ should be a unit in $A$. Thus, $fa=1$ for some $f\in A$. This means that $(fa)_0=1$. 
So we get $f_0a=1$. Hence $a$ is a unit in $R$, which leads us to the conclusion that $R$ is a right Archemidean ring.

We show that $\alpha$ preserves nonunits. If $b\in R$ is a nonunit element and $\alpha(b)$ is a unit, then $c\alpha(b)=1$ for some $c\in R$. Assume that $f(x)b=1$, then $f_0b=1$ which is not possible. So $b$ is not a unit in $A$. Also, $c^nxb^{n}=c^n\alpha(b^n)x=x=c^mxb^m$. So $x\in \bigcap_{n} Ab^n$, while $b$ is not a unit in $A$. This contradiction shows that $\alpha(b)$ should not be a unit in $R$.
\end{proof}

In the case $\alpha=id_R$ - but $R$ is still not necessarily commutative - we conclude the next corollary immediately.
 
\begin{corollary}\label{archcor}
Let $R$ be a right (resp. left) Archimedean  reduced ring which satisfies the ACC on annihilators.
Then the power series ring $R[[x]]$ is a right (resp. left)
Archimedean  reduced ring.
\end{corollary}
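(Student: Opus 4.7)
The plan is to derive the corollary as the special case $\alpha = \mathrm{id}_R$ of Theorem \ref{rigidar}, so the main task is simply to verify that the hypotheses of that theorem reduce correctly in the identity case. First, I would observe that $R[[x;\mathrm{id}_R]]$ is literally the ordinary power series ring $R[[x]]$, so the two conclusions (Archimedean and reduced) in the corollary correspond exactly to the output of Theorem \ref{rigidar}.

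Next I would verify the three structural hypotheses on $\alpha = \mathrm{id}_R$. The rigidity condition $a\alpha(a)=0 \Rightarrow a=0$ becomes $a^2 = 0 \Rightarrow a = 0$, which is exactly the hypothesis that $R$ is reduced; so $R$ is $\mathrm{id}_R$-rigid. The endomorphism $\mathrm{id}_R$ is trivially surjective, and it trivially preserves nonunits (since $\mathrm{id}_R(a) = a$). The remaining hypotheses of Theorem \ref{rigidar} (ACC on annihilators, and $R$ being right or left Archimedean) are assumed in the corollary directly.

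Applying part~(1) of Theorem \ref{rigidar} then yields that $R[[x;\mathrm{id}_R]] = R[[x]]$ is right Archimedean whenever $R$ is right Archimedean, and applying part~(2) gives the left-sided analogue. The reducedness of $R[[x]]$ is already contained in the conclusion of Theorem \ref{rigidar} (it is the assertion that the skew power series ring is a \emph{reduced} Archimedean ring, justified in the proof by invoking \cite{moussavi} for rigid endomorphisms, which specializes to the classical fact that $R$ reduced implies $R[[x]]$ reduced).

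There is no serious obstacle here: the corollary is a genuine immediate consequence. The only thing worth flagging is that one must explicitly note the equivalence between the reducedness of $R$ and the rigidity of the identity endomorphism, since Theorem \ref{rigidar} is phrased in terms of $\alpha$-rigidity rather than reducedness of $R$. Once that translation is made, the proof reduces to a single sentence citing Theorem \ref{rigidar}.
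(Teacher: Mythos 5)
Your proposal is correct and matches the paper's intent exactly: the corollary is obtained by specializing Theorem~\ref{rigidar} to $\alpha=\mathrm{id}_R$, noting that $\mathrm{id}_R$-rigidity is precisely reducedness and that the identity is surjective and preserves nonunits. The paper itself states the corollary as an immediate consequence in the same way, so your verification of the hypothesis translation is the whole argument.
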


%



In Theorem~\ref{rigidar}, we proved that under some conditions on the ring $R$, it is possible to transfer the property of being Archimedean from $R$ to $R[[x;\alpha]]$. 
It is natural to ask that what are the minimum conditions required on $R$ to allow such a transfer from $R$ to $R[[S,\omega]]$. With the help of the next example, we justify that the condition rigid required in Theorem~\ref{rigidar} cannot be changed to reduced.

\begin{example}\label{reducekhubu}
%
Let $\mathbb{F}$ be any field and put $S:=(\mathbb{N}\cup \{0\})\times (\mathbb{Z})$
with usual addition and usual order.
 Consider $\mathbb{F}[[S]]= \mathbb{F}[[(\mathbb{N}\cup \{0\})\times (\mathbb{Z})]]:=T$. If $\mathbb{F}[[\mathbb{Z}]]:=R$, then one can see that $T=R[[\mathbb{N}\cup \{0\}]]:=R[[S]]$. Also, we can see that $R$ is isomorphic to the topological closure of $\mathbb{F}((x))$ where the metric is induced from the p-adic valuation $\deg(\cdot)$ and $x:=e_{(1,0)}\in T$ - we use the standard notation $e_s$ for $s\in S$ in the  skew generalized  power series rings. 

Define $x:=e_{(1,0)}\in T$ and $y:=e_{(0,1)}\in T$.
One can see that $T$ is an skew generalized power series rings of $R$ where $S$ is a positive stirctly totally ordered monoid. 
Then, $T$ is a Noetherian integral domain (i.e. a reduced ring). Also, $T$ satisfies both \emph{ACCP} and \emph{ACC} on annihilators.
Take $\alpha :T\rightarrow T$ defined by 
\begin{align}
\alpha (1)=1, \quad \alpha (x)=y^{-1}, \quad \alpha (y)=y. 
\end{align} 
Note that $\alpha$ is not a monomorphism, and therefore, not rigid because $\alpha (xy^2-y)=0$ and $xy^2-y \neq 0$.  So $T$ is not an $\alpha$-rigid ring.
 
Let $U:=T[[z;\alpha]]$ and $f_k(z)=x\sum _{n=k}^{\infty}y^n z \in T$. Then 
\begin{align}
f_{k+1}(z).x=\big (\sum _{n=k+1}^{\infty} xy^nz \big )x=\sum _{n=k+1}^{\infty}x\alpha (x)y^n z=\sum _{n=k+1}^{\infty}xy^{n-1}z=\sum _{n=k}^{\infty}xy^nz=f_k(z).
\end{align}
It shows that $ f_k U \subseteq  f_{k+1} U$ and we obtain an ascending chain of principal ideals in $T$. In order to show that $f_k U \subsetneq  f_{k+1} U$, it is enough to show that $x$ is not unit in $U$. Suppose that $h(z)x=1$ for some $h(z) \in U$. Let $h(z)=\sum _{i=0}^{\infty}g_i(x,y,y^{-1})z^i$, then 
$h(z)x=g_0(x,y,y^{-1})x+(\sum _{i=1}^{\infty}g_i(x,y,y^{-1})z^i)x=1$.
It implies, $g_0(x,y,y^{-1})x=1$ showing that $x$ is a unit in $T$, which is not true. Therefore, we obtain a non-stablizing chain in $U$ given by $f_1 T \subsetneq  f_{2} U\subsetneq \ldots$. Hence, $U$ does not satisfy \emph{ACCPR}.
\end{example}

In \cite{lantz}, the authors give another example which also shows that the $\alpha$-rigid condition in Theorem~\ref{rigidar} is not superfluous. A ring is $\alpha$-compatible if $a\alpha(b)=0$ results in $ab=0$ - see more information about $\alpha$-compatible rings in \cite{moussavi}. It is known that the set of $\alpha$-rigid rings are the intersection of the set of $\alpha$-compatible rings and reduced rings. We proved that Archimedean property can be shifted from the $\alpha$-rigid rings and propose a counterexample for the reduced rings. The remaining natural question which seeks an answer is:
\begin{question}
Let  $R$ be an $\alpha$-compatible ring satisfying the \emph{ACC} on annihilators. Furthermore, let  $\alpha$ be a surjective endomorphism in $R$. Then $R$ is right (resp. left) Archimedean if and only if the reduced ring $R[[x;\alpha]]$ is right (resp. left) Archimedean.
\end{question}

\end{document}